\theoremstyle{plain}
\newtheorem{lemma}{Lemma}[section]
\newtheorem{theorem}[lemma]{Theorem}
\newtheorem{corollary}[lemma]{Corollary}
\theoremstyle{definition}
\numberwithin{equation}{section} \thispagestyle{empty} \voffset
\begin{document}
	\baselineskip 15truept
	\title{Construction of four completely independent spanning trees on augmented cubes}
	\subjclass[2010]{Primary 16W10; Secondary 06A06; 47L30} 
	\maketitle 
	\begin{center} 
		
		S. A. Mane, S. A. Kandekar, B. N. Waphare\\  
		{\small Center for Advanced Studies in Mathematics,
			Department of Mathematics,\\ Savitribai Phule Pune University, Pune-411007, India.}\\
		\email{\emph{manesmruti@yahoo.com; smitakandekar54@gmail.com; waphare@yahoo.com}} 
	\end{center} 
\begin{abstract}
	Let $T_1, T_2,.......,T_k$ be spanning trees in a graph $G$. 
If for any pair of vertices $\{u,v\}$ of $G$, 
the paths between $u$ and $v$ in every $T_i$( $1\leq i\leq k$)
do not contain common edges  
and common vertices, 
except the vertices $u$ and $v$, then  $T_1, T_2,.......,T_k$ 
are called completely independent spanning trees in $G$. 
The $n-$dimensional augmented cube, denoted as $AQ_n$,
a variation of the hypercube possesses several embeddable properties that the hypercube and its variations do not possess. 
For $AQ_n$ ($n \geq 6$), construction of $4$ completely independent spanning trees of which two trees with diameters $2n-5$ and two trees with diameters $2n-3$ are given. 
\end{abstract}

\noindent {\bf Keywords:}  Completely independent spanning trees, Augmented cubes 
\section{Introduction} 
\indent Interconnection networks have been widely studied recently. The architecture of an interconnection networks is usually denoted as an undirected graph $G$. A graph $G$ is a triple consisting of a vertex set $V(G)$, an edge set $E(G)$, and a relation that associates with each edge two vertices called its endpoints\cite{we}. Many useful topologies have been proposed to balance performance and cost parameters. Among them, the hypercube $Q_n$ is one of the most popular topology and has been studied for parallel networks. Augmented cubes are derivatives of hypercubes (proposed by Choudam and Sunitha\cite{cs}) with good geometric features that retain some favorable properties of the hypercubes (since $Q_n \subset AQ_n$), such as vertex symmetry, maximum connectivity, routing and broadcasting procedures with linear time complexity. An $n-$dimensional augmented cube $AQ_n$ can be formed as an extension of $Q_n$ by adding some links. For any positive integer $n$, $AQ_n$ is $(2n-1)$-regular and  $(2n-1)$-connected (except $n = 3$) graph with $2^n$ vertices. Moreover, $AQ_n$ possesses several embeddable properties that the hypercube and its variations do not possess. The main merit of augmented cubes is that their diameters are about half of those of the corresponding hypercubes. \\
A tree $T$ is called a spanning tree of a graph $G$ if $V(T)= V(G)$. Two spanning trees $T_1$ and $T_2$ in $G$ are edge-disjoint if $E(T_1)\cap E(T_2)= \phi$. For a given tree $T$ and a given pair of vertices $u$ and $v$ of $T$, let $P_T(u,v)$ be the set of vertices in the unique path between $u$ and $v$ in $T$. Two spanning trees $T_1$ and $T_2$ are internally vertex disjoint if for any pair of vertices $u$ and $v$ of $V(G)$, $P_{T_1}(u,v)\cap P_{T_2}(u,v)= \{u,v\}$. Finally, the spanning trees $T_1, T_2,.......,T_k$ of $G$ are completely independent spanning trees (CISTs for short) if they are pairwise edge-disjoint and internally vertex disjoint.\\
The study of CISTs was due to the early work of Hasunuma\cite{h1}, he conjectured that there are $k$ CISTs in any $2k-$connected graph. P$\acute{e}$terfalvi\cite{pt} gave counter example to disprove Hasunuma's conjecture. He showed that there exists a $k-$connected graph which does not contain two CISTs for each $k \geq 2$. Pai et al\cite{p1} showed that the results are negative to Hasunuma's conjecture in case of hypercube of dimension $n \in \{10,12,14,20,22,24,26,28,30\}$. Many authors provided a necessary condition of CISTs\cite{a,c1,f, h3,hl}. For more detail work on CISTs and their diameters see \cite{c2, g,h1,h2,h3,h4,h5,m,w,y}.\\
Constructing CISTs has many applications on interconnection networks such as fault-tolerant broadcasting and secure message distribution.In underlying graph of communication networks, we want vertices to be close together to avoid communication delays.\\
Pai and Chang\cite{p2} provided a unified approach for constructing two CISTs in several hypercube-variant networks, in particular for an $n-$dimensional hypercube variant network, the diameters of the constructed CISTs were $2n-1$. They asked about the hypercube variant networks which they studied "how to design algorithms to construct more than two CISTs in high dimensional hypercube-variant networks with smaller diameter?"\\
Motivated by this question, we provide a construction of four CISTs in augmented cube $AQ_n$ ($n \geq 6$) of which two trees with diameters $2n-3$ and two trees with diameters $2n-5$ are constructed. Also, construction of $n-1$ CISTs in augmented cube $AQ_n$ for $n = 3,4,5$ is given which pointed out that the Hasunuma's conjecture does hold in the case of $AQ_n$ for $n = 3,4,5$.  \\
For undefined terminology and notation, see\cite{we}

\section {Preliminaries}

The definition of the $n-$dimensional augmented cube is stated as the following. Let $n \geq 1$ be an integer. The $n$-dimensional augmented cube, denoted by $AQ_n$, is a graph with $2^n$ vertices, and each
vertex $u$ can be distinctly labeled by an $n$-bit binary string, $u = u_1u_2....u_n$. $AQ_1$ is the graph $K_2$ with vertex set $\{0, 1\}$. For
$n \geq 2$, $AQ_n$ can be recursively constructed by two copies of $AQ_{n-1}$, denoted by $AQ^0_{n-1}$
and $AQ^1_{n-1}$, and by adding
$2^n$ edges between $AQ^0_{n-1}$ and $AQ^1_{n-1}$
as follows:\\ Let $V(AQ^0_{n-1}) = \{0u_2....u_n : u_i \in \{0, 1\}, 2 \leq i \leq n\}$ and 
$V(AQ^1_{n-1}) = \{1v_2....v_n : v_i \in \{0, 1\}, 2 \leq i \leq n\}$. A vertex $u = 0u_2....u_n$ of $AQ^0_{n-1}$
is joined to a vertex $v = 1v_2....v_n $ of $AQ^1_{n-1}$
if and only if for every $i$, $2 \leq i \leq n$ either\\
1. $u_i = v_i$; in this case an edge $\langle u, v \rangle$ is called a hypercube edge and we say $v = u^h$, or\\
2. $u_i = \overline{v_i}$; in this case an edge $\langle u, v \rangle$ is called a complement edge and we say $v = u^c$.\\
Let $E^h_n = \{\langle u, u^h \rangle : u \in V(AQ^0_{n-1})\}$ and $E^c_n = \{\langle u, u^c \rangle : u \in V(AQ^0_{n-1})\}$. See Fig.$1$.\\
\begin{center}
\unitlength 1mm 
\linethickness{0.4pt}
\ifx\plotpoint\undefined\newsavebox{\plotpoint}\fi 
\begin{picture}(136.25,55.25)(0,0)
\put(5.5,16.25){\line(0,1){18.25}}
\put(25,16.25){\line(0,1){18.25}}
\put(63.75,16.25){\line(0,1){18.25}}
\put(112,16.25){\line(0,1){18.25}}
\put(47.75,16.25){\line(0,1){18.25}}
\put(86.5,16.25){\line(0,1){18.25}}
\put(134.75,16.25){\line(0,1){18.25}}
\put(25.25,34.25){\line(1,0){22.5}}
\put(64,34.25){\line(1,0){22.5}}
\put(112.25,34.25){\line(1,0){22.5}}
\put(25.25,16.75){\line(1,0){22.25}}
\put(64,16.75){\line(1,0){22.25}}
\put(112.25,16.75){\line(1,0){22.25}}
\multiput(25,34)(.0439453125,-.0336914063){512}{\line(1,0){.0439453125}}
\multiput(63.75,34)(.0439453125,-.0336914063){512}{\line(1,0){.0439453125}}
\multiput(112,34)(.0439453125,-.0336914063){512}{\line(1,0){.0439453125}}
\multiput(25.25,17)(.0439453125,.0336914063){512}{\line(1,0){.0439453125}}
\multiput(64,17)(.0439453125,.0336914063){512}{\line(1,0){.0439453125}}
\multiput(112.25,17)(.0439453125,.0336914063){512}{\line(1,0){.0439453125}}
\put(86.75,34){\line(1,0){25.5}}
\put(86.5,17.25){\line(1,0){25.5}}
\multiput(64,34)(.097082495,-.0337022133){497}{\line(1,0){.097082495}}
\multiput(86.5,16.75)(.0929672447,.0337186898){519}{\line(1,0){.0929672447}}
\multiput(86.5,34)(.0942460317,-.0337301587){504}{\line(1,0){.0942460317}}
\multiput(112,34)(-.0920303605,-.0336812144){527}{\line(-1,0){.0920303605}}
\qbezier(63.75,34.25)(101.5,49.25)(134.25,34.25)
\qbezier(64,16.5)(95.875,2.375)(134.25,16.75)
\put(5.25,34.5){\circle*{1.581}}
\put(5.5,15.5){\circle*{1.581}}
\put(24.75,16.75){\circle*{1.581}}
\put(25,33.5){\circle*{1.581}}
\put(47.75,33.5){\circle*{1.581}}
\put(47.75,16.25){\circle*{1.581}}
\put(64.5,16.25){\circle*{1.581}}
\put(63.75,34.25){\circle*{1.581}}
\put(87,34.25){\circle*{1.581}}
\put(112.5,34){\circle*{1.581}}
\put(135,33.75){\circle*{1.581}}
\put(134.5,17){\circle*{1.581}}
\put(112.25,16.5){\circle*{1.581}}
\put(86.5,17.25){\circle*{1.581}}
\put(5.25,38.5){\makebox(0,0)[cc]{$\tiny{0}$}}
\put(5.25,11){\makebox(0,0)[cc]{$\tiny{1}$}}
\put(23.5,38.5){\makebox(0,0)[cc]{$\tiny{00}$}}
\put(48.25,40.25){\makebox(0,0)[cc]{$\tiny{10}$}}
\put(23,12.25){\makebox(0,0)[cc]{$\tiny{01}$}}
\put(48.25,11.75){\makebox(0,0)[cc]{$\tiny{11}$}}
\put(64.5,39.5){\makebox(0,0)[cc]{$\tiny{000}$}}
\put(63.5,12.5){\makebox(0,0)[cc]{$\tiny{010}$}}
\put(85.5,37.5){\makebox(0,0)[cc]{ $\tiny{001}$}}
\put(85.5,12.5){\makebox(0,0)[cc]{$\tiny{011}$}}
\put(112,38){\makebox(0,0)[cc]{$\tiny{101}$}}
\put(136.25,38.25){\makebox(0,0)[cc]{$\tiny{100}$}}
\put(136.25,12.25){\makebox(0,0)[cc]{$\tiny{110}$}}
\put(111.75,13.25){\makebox(0,0)[cc]{$\tiny{111}$}}
\put(5.25,6.75){\makebox(0,0)[cc]{$\tiny AQ_{1}$}}
\put(35.25,6.75){\makebox(0,0)[cc]{$\tiny AQ_{2}$}}
\put(96.75,5.5){\makebox(0,0)[cc]{$\tiny AQ_{3}$}}
\put(70.5,.25){\makebox(0,0)[cc]
	{Fig. 1. The augmented cubes of dimension 1, 2 and 3 }}
\put(52.75,25){\makebox(0,0)[cc]{$\tiny AQ^1_{1}$}}
\put(21.5,26){\makebox(0,0)[cc]{$\tiny AQ^0_{1}$}}
\put(67.5,25.25){\makebox(0,0)[cc]{$\tiny AQ^0_{2}$}}
\put(130.75,25.5){\makebox(0,0)[cc]{$\tiny AQ^1_{2}$}}
\end{picture}
\end{center}
\vspace{.5cm}

The following lemma is helpful to visualize edges of $AQ_n$.\\
\begin{lemma} [\cite{cs}] Let $G$ be the simple graph with vertex set $V(G)= \{a_1a_2....a_n : a_i = 0$ or $ 1\}$ where two vertices $A= a_1a_2....a_n $ and $B = b_1b_2...b_n$ are joined iff there exists an integer $k:  1 \leq k \leq n$ such that either \\
	($1$) $a_k = \overline b_k$ and $a_i = b_i$, for every $i$, $i \neq k$, or\\
	($2$) $a_i = b_i$, for $1 \leq i \leq k-1$ and $a_i = \overline b_i$, for $k \leq i \leq n$.
\end{lemma}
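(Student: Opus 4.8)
The plan is to prove, by induction on $n$, that the graph $G$ described in the statement coincides with $AQ_n$ under the natural identification of vertices with their $n$-bit labels; equivalently, every pair joined by condition $(1)$ or condition $(2)$ is an edge of $AQ_n$ and conversely. For the base case $n=1$, both graphs are $K_2$ on $\{0,1\}$: in $G$ the two vertices are adjacent via condition $(1)$ with $k=1$, and $AQ_1=K_2$ by definition.

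For the inductive step, assume the claim for $n-1$ and partition $V(G)$ into $V^0=\{0a_2\cdots a_n\}$ and $V^1=\{1a_2\cdots a_n\}$. First I would show that the subgraph of $G$ induced on $V^0$ (and symmetrically on $V^1$) becomes, after deleting the leading coordinate, exactly the graph described by the lemma on $(n-1)$-bit strings. Indeed, an edge inside $V^0$ forces the splitting index to satisfy $k\geq 2$, since the first coordinates agree; condition $(1)$ on the $n$-bit strings then restricts to condition $(1)$ on the truncated $(n-1)$-bit strings with index $k-1$, and condition $(2)$ with index $k$ restricts to condition $(2)$ with index $k-1$. By the induction hypothesis this induced graph is $AQ_{n-1}$, so $G[V^0]$ and $G[V^1]$ are precisely the two copies $AQ^0_{n-1}$ and $AQ^1_{n-1}$ used in the recursive construction.

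Next I would analyze the edges of $G$ running between $V^0$ and $V^1$. If $A=0a_2\cdots a_n$ and $B=1b_2\cdots b_n$ are adjacent in $G$, then $a_1\neq b_1$ forces the splitting index to be $k=1$ in either condition: condition $(1)$ then gives $a_i=b_i$ for all $i\geq 2$, i.e. $B=1a_2\cdots a_n=A^h$, while condition $(2)$ gives $a_i=\overline{b_i}$ for all $i\geq 1$, i.e. $B=A^c$. Conversely each such pair is an edge of $G$. Hence the set of $V^0$–$V^1$ edges of $G$ is exactly $\{\langle u,u^h\rangle : u\in V^0\}\cup\{\langle u,u^c\rangle : u\in V^0\}=E^h_n\cup E^c_n$, which is precisely the set of edges added when building $AQ_n$ from two copies of $AQ_{n-1}$. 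Combining the three parts yields $G=AQ_n$.

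I expect the only delicate point to be the bookkeeping in the case analysis on the splitting index $k$: namely the boundary values $k=1$ (a type-$(2)$ edge crossing between $V^0$ and $V^1$) and $k=n$ (a type-$(2)$ edge degenerating to a type-$(1)$ edge, which also reconciles the degree count $2n-1$), together with the index shift $k\mapsto k-1$ when passing from $n$-bit to $(n-1)$-bit strings. Once these are handled carefully, the induction closes with essentially no computation.
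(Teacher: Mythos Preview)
Your inductive argument is correct: the base case, the reduction of intra-block edges to the $(n-1)$-dimensional statement via the index shift $k\mapsto k-1$, and the identification of cross-block edges with $E^h_n\cup E^c_n$ by forcing $k=1$ are all sound, and the observation that $k=n$ makes conditions $(1)$ and $(2)$ coincide correctly accounts for the degree $2n-1$.

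There is, however, nothing in the paper to compare against. The lemma is quoted from \cite{cs} (Choudum and Sunitha) purely as a convenient alternate description of the edge set of $AQ_n$; the present paper gives no proof of it at all. So your write-up is not an alternative to the paper's argument but rather a self-contained verification of a result the authors simply import.
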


\section {Construction of CISTs in augmented cubes }

We need the following result given in \cite{h1} by Hasunuma.\\

\begin{lemma} [\cite{h1} \label{lm1}] Let $k \geq 2$ be an integer, $T_1, T_2,.......,T_k$ are completely independent spanning trees in a graph $G$ if and only if they are edge-disjoint spanning trees of $G$ and for any $v \in V(G)$, there is at most one $T_i$ such that $d_{T_i}(v) > 1$. 
\end{lemma}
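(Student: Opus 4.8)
The plan is to prove the two implications of the characterization separately, working straight from the definition of CISTs; essentially all of the content sits in one direction, and what little there is reduces to an elementary fact about partitions.

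For the easy direction I would assume $T_1,\dots,T_k$ are pairwise edge-disjoint spanning trees such that every vertex has degree exceeding $1$ in at most one of them, and I would verify internal vertex-disjointness. Fix distinct $u,v\in V(G)$ and indices $i\neq j$. If some $z\in P_{T_i}(u,v)\cap P_{T_j}(u,v)$ with $z\notin\{u,v\}$ existed, then $z$ would be an interior vertex of the $u$--$v$ path in $T_i$, hence would have two path-neighbours there and so $d_{T_i}(z)\geq 2$; symmetrically $d_{T_j}(z)\geq 2$, contradicting the hypothesis. Thus $P_{T_i}(u,v)\cap P_{T_j}(u,v)=\{u,v\}$, and together with edge-disjointness this shows the trees are CISTs. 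This direction should be immediate.

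For the converse I would assume $T_1,\dots,T_k$ are CISTs; edge-disjointness holds by definition, so the only point is to rule out a vertex $v$ with $d_{T_i}(v)\geq 2$ and $d_{T_j}(v)\geq 2$ for some $i\neq j$. Deleting $v$ from $T_i$ and from $T_j$ produces two partitions $\mathcal C=\{C_1,\dots,C_p\}$ and $\mathcal D=\{D_1,\dots,D_q\}$ of $V(G)\setminus\{v\}$ into the vertex sets of the resulting components, with $p,q\geq 2$ since $v$ is interior in each tree. The crux is a small combinatorial fact: if two partitions of a finite set each have at least two blocks, then some pair of elements lies in different blocks of \emph{both} partitions. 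Applying it to $\mathcal C$ and $\mathcal D$, I would take such a pair $x,y$; the $x$--$y$ path in $T_i$ must then cross between two components of $T_i-v$ and hence pass through $v$, and likewise the $x$--$y$ path in $T_j$ passes through $v$, so $v\in P_{T_i}(x,y)\cap P_{T_j}(x,y)$ with $v\notin\{x,y\}$, contradicting internal vertex-disjointness.

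The one step deserving care, and the main (modest) obstacle, is that combinatorial fact, which I would prove as follows: suppose $\mathcal C,\mathcal D$ partition $S$ with at least two blocks each and no pair of elements is separated by both. Pick $a$ in a block $C_1$ and $b$ in a different block $C_2$. Every element outside $C_1$ must lie in the $\mathcal D$-block of $a$, and every element outside $C_2$ must lie in the $\mathcal D$-block of $b$; since $b\notin C_1$, these two $\mathcal D$-blocks coincide, say both equal $D$. Then $(S\setminus C_1)\cup(S\setminus C_2)\subseteq D$, but $C_1\cap C_2=\emptyset$ makes the left-hand side all of $S$, forcing $\mathcal D=\{S\}$ and contradicting that $\mathcal D$ has at least two blocks. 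This closes the proof.
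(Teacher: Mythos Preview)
Your proof is correct. Note, however, that the paper does not supply its own proof of this lemma: it is quoted from Hasunuma~\cite{h1} and used as a black box, so there is nothing in the paper to compare against. Your argument is a clean, self-contained justification; the partition lemma you isolate (two partitions of a finite set, each with at least two blocks, must simultaneously separate some pair) is exactly the combinatorial heart of the matter, and your proof of it is tidy.
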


Pai and Chang\cite{p2} constructed two CISTs in several hypercube-variant networks, they proved the following result.

\begin{lemma} [\cite{p2} \label{lm2}] Let $G_n$ be the $n-$dimensional variant hypercube for $n \geq 4$ and suppose that $T_1$ and $T_2$ are two CISTs of $G_n$. For $i \in \{1,2\} $, let $\overline T_i$ be a spanning tree of $G_{n+1}$ constructed from $T^0_i$ and $T^1_i$ by adding an edge $\langle u_i, v_i \rangle \in E(G_{n+1})$ to connect two internal vertices $u_i \in V(T^0_i)$ and $v_i \in V(T^1_i)$. Then, $\overline T_1$ and $\overline T_2$ are two CISTs of $G_{n+1}$.
\end{lemma}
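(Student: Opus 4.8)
The plan is to deduce everything from Hasunuma's characterization (Lemma \ref{lm1}): a family of spanning trees of a graph is a family of CISTs precisely when the trees are pairwise edge-disjoint and every vertex is an internal vertex (degree $>1$) in at most one of them. So for $\overline{T}_1,\overline{T}_2$ I must verify two things: (i) $E(\overline{T}_1)\cap E(\overline{T}_2)=\emptyset$, and (ii) for every $w\in V(G_{n+1})$ at most one of $d_{\overline{T}_1}(w),d_{\overline{T}_2}(w)$ exceeds $1$. That each $\overline{T}_i$ is itself a spanning tree is already part of the construction in the statement (it is connected, spans $V(G_{n+1})$, and has $2^{n+1}-1$ edges), so no work is needed there. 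Throughout I will use the fact, obtained by applying Lemma \ref{lm1} to the given CIST pair $T_1,T_2$ of $G_n$, that the set of internal vertices of $T_1$ and the set of internal vertices of $T_2$ are disjoint subsets of $V(G_n)$; call this $(\star)$.

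For edge-disjointness, I would partition the edges of $\overline{T}_i$ into the edges lying inside $G_n^0$ (a copy of $E(T_i)$), the edges lying inside $G_n^1$ (another copy of $E(T_i)$), and the single cross edge $\langle u_i,v_i\rangle$. Edges inside $G_n^0$ cannot agree across $i=1,2$ because $E(T_1)\cap E(T_2)=\emptyset$ in $G_n$; likewise inside $G_n^1$; and an edge internal to one copy can never equal a cross edge. The only remaining possibility is $\langle u_1,v_1\rangle=\langle u_2,v_2\rangle$, which would force $u_1=u_2$ (and $v_1=v_2$); but $u_1$ is internal in $T_1^0$ and $u_2$ is internal in $T_2^0$, so $(\star)$ transported to the copy $G_n^0$ gives $u_1\neq u_2$. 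Hence the two added edges are distinct and (i) holds.

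For the degree condition, take $w\in V(G_{n+1})$; by the symmetry of the two copies assume $w\in V(G_n^0)$. In $\overline{T}_i$ the degree of $w$ equals $d_{T_i^0}(w)=d_{T_i}(w)$, increased by $1$ exactly when $w=u_i$. If $w\notin\{u_1,u_2\}$, then $d_{\overline{T}_i}(w)=d_{T_i}(w)$ for both $i$ and $(\star)$ gives the claim at once. If $w=u_1$ (hence $w\neq u_2$, since $u_1\neq u_2$), then $d_{\overline{T}_1}(w)=d_{T_1}(u_1)+1>1$, while $u_1$, being internal in $T_1$, is a leaf of $T_2$ by $(\star)$, so $d_{\overline{T}_2}(w)=d_{T_2}(u_1)=1$; thus only $\overline{T}_1$ has $w$ internal. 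The case $w=u_2$ is symmetric. This establishes (ii), and Lemma \ref{lm1} then yields that $\overline{T}_1$ and $\overline{T}_2$ are two CISTs of $G_{n+1}$.

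The argument is essentially bookkeeping on vertex degrees once $(\star)$ is in hand, and I do not expect a genuine obstacle; the one place that requires care, and the only place the hypothesis that $u_i$ and $v_i$ be \emph{internal} vertices (rather than arbitrary vertices) of the trees is actually used, is the distinctness of the two cross edges $\langle u_1,v_1\rangle$ and $\langle u_2,v_2\rangle$ in step (i) and the leaf-in-the-other-tree conclusion in step (ii) — both supplied by $(\star)$.
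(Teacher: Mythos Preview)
Your argument is correct. The paper does not actually supply a proof of this lemma; it quotes the result from Pai and Chang \cite{p2} and merely remarks that ``by using the same proof technique'' the $k$-tree Corollary~\ref{lm3} follows. Your proof via Hasunuma's characterization (Lemma~\ref{lm1}) --- checking edge-disjointness of the three edge-blocks of each $\overline{T}_i$ and then verifying the internal-vertex condition by a case split on whether $w$ is one of the bridge endpoints --- is precisely the standard route and is what the paper implicitly has in mind, so there is nothing to contrast.
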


By using the same proof technique of above theorem one can state following corollary.

\begin{corollary} [ \label{lm3}]
	Let $G_n$ be the $n-$dimensional variant hypercube for $n \geq 4$ and suppose that $T_i$, for $1 \leq i \leq k $ $(k < n)$ be $k$ CISTs of $G_n$. Let $\overline T_i$ be a spanning tree of $G_{n+1}$ constructed from $T^0_i$ and $T^1_i$ by adding an edge $\langle u_i, v_i \rangle \in E(G_{n+1})$ to connect two internal vertices $u_i \in V(T^0_i)$ and $v_i \in V(T^1_i)$. Then, $\overline T_i$ are $k$ CISTs of $G_{n+1}$
\end{corollary}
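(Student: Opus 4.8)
The plan is to imitate the proof of Lemma~\ref{lm2} verbatim, simply carrying the extra index $i$ along for the ride. Concretely, I would invoke Lemma~\ref{lm1} as the working criterion: a family of spanning trees is a family of CISTs if and only if the trees are pairwise edge-disjoint and every vertex is internal (degree $>1$) in at most one of them. So it suffices to check these two properties for the family $\overline T_1,\dots,\overline T_k$ of $G_{n+1}$.

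First I would verify that each $\overline T_i$ is indeed a spanning tree of $G_{n+1}$: it is built from the spanning trees $T_i^0$ of $G_n^0$ and $T_i^1$ of $G_n^1$ by adding the single cross edge $\langle u_i,v_i\rangle$, so it is connected, spans $V(G_{n+1})=V(G_n^0)\cup V(G_n^1)$, and has $(2^n-1)+(2^n-1)+1=2^{n+1}-1$ edges, hence is a tree. Next, edge-disjointness: for $i\neq j$, the edges of $\overline T_i$ inside $G_n^0$ are those of $T_i^0$, which are disjoint from those of $T_j^0$ because $T_1,\dots,T_k$ are CISTs (hence edge-disjoint) in $G_n$; similarly inside $G_n^1$; and the two cross edges $\langle u_i,v_i\rangle$, $\langle u_j,v_j\rangle$ must be distinct — this is the one place where the hypothesis that the $u_i$ (resp.\ $v_i$) are \emph{internal} vertices of the $T_i^0$ (resp.\ $T_i^1$) is used together with the degree condition, and I would argue that since at most one tree can have a given vertex internal, the endpoints $u_i$ are pairwise distinct, so the added edges are pairwise distinct. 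Thus the family is edge-disjoint.

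Finally, the degree condition: take any vertex $w\in V(G_{n+1})$, say $w\in V(G_n^0)$ (the other case is symmetric). Its degree in $\overline T_i$ is $d_{T_i^0}(w)$, plus $1$ if $w=u_i$. Since $T_1,\dots,T_k$ are CISTs of $G_n$, at most one index $i_0$ has $d_{T_{i_0}^0}(w)>1$. If $w$ is a leaf in every $T_i^0$, then $w$ can be promoted to degree $2$ in at most one $\overline T_i$, namely the one (if any) whose chosen internal vertex $u_i$ equals $w$ — but $w$ being a leaf of $T_i^0$ contradicts $u_i$ being internal, so in fact $w$ stays a leaf in all $\overline T_i$; if $w$ is internal in exactly one $T_{i_0}^0$, then it is a leaf in all the others and, again because $u_i$ must be internal in $T_i^0$, the added edge can only attach at $u_{i_0}=w$ within the tree where $w$ is already internal, leaving $w$ a leaf in every other $\overline T_i$. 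Either way at most one $\overline T_i$ has $d_{\overline T_i}(w)>1$, and Lemma~\ref{lm1} finishes the proof.

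The main obstacle — really the only subtlety beyond bookkeeping — is the clean handling of the cross edges: one must make sure that the requirement ``$u_i,v_i$ internal'' is exactly what rules out two different added edges sharing an endpoint and what prevents a leaf of some $T_i^0$ from becoming internal in $\overline T_i$. Everything else is a routine reduction to the already-known facts about $T_1,\dots,T_k$ via Lemma~\ref{lm1}, so the corollary follows by the same argument as Lemma~\ref{lm2} with $k$ in place of $2$.
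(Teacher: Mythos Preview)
Your proposal is correct and follows precisely the approach the paper indicates: the paper does not give a separate proof of the corollary but simply remarks that ``by using the same proof technique of above theorem one can state following corollary,'' i.e., the proof of Lemma~\ref{lm2} carries over with $k$ in place of $2$. Your write-up via Lemma~\ref{lm1} (edge-disjointness plus the internal-vertex condition, with the ``$u_i$ internal'' hypothesis forcing the cross endpoints to be pairwise distinct) is exactly that carried-over argument spelled out in detail.
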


Firstly, we list out $n-1$ CISTs in $AQ_n$ for $n = 3, 4$. See Fig.$2$
 and  Fig.$3$.\\
\begin{center}
 \unitlength 1.2mm 
 \linethickness{0.4pt}
 \ifx\plotpoint\undefined\newsavebox{\plotpoint}\fi 
 \begin{picture}(101.75,50.75)(0,0)
 \put(23.25,9.75){\circle*{3.202}}
 \put(22.851,32.351){\circle*{3.202}}
 \put(78.25,10.851){\circle*{3.202}}
 \put(77.851,33.452){\circle*{3.202}}
 \multiput(22.25,32.5)(.0434783,-.0326087){23}{\line(1,0){.0434783}}
 \put(8.5,36.75){\circle*{2.062}}
 \put(23.531,45.781){\circle*{2.062}}
 \put(39.281,36.531){\circle*{2.062}}
 \put(93.281,37.781){\circle*{2.062}}
 \put(63.281,39.031){\circle*{2.062}}
 \put(9.531,15.031){\circle*{2.062}}
 \put(37.031,19.281){\circle*{2.062}}
 \put(36.281,9.781){\circle*{2.062}}
 \put(64.281,19.531){\circle*{2.062}}
 \put(92.531,19.781){\circle*{2.062}}
 \put(94.281,10.781){\circle*{2.062}}
 \put(23.5,45.5){\line(0,-1){12.75}}
 \put(23.5,32.75){\line(4,1){16}}
 \multiput(8.5,36.5)(.12394958,-.033613445){119}{\line(1,0){.12394958}}
 \multiput(9,15)(.081871345,-.033625731){171}{\line(1,0){.081871345}}
 \put(36.25,9.75){\line(-1,0){12.75}}
 \multiput(63,38.75)(.097315436,-.033557047){149}{\line(1,0){.097315436}}
 \multiput(93.25,37.75)(-.128151261,-.033613445){119}{\line(-1,0){.128151261}}
 \multiput(64,19.25)(.0591836735,-.0336734694){245}{\line(1,0){.0591836735}}
 \put(93.75,10.75){\line(-1,0){15.25}}
 \put(43.5,9.5){\makebox(0,0)[cc]{$110(7)$}}
 \put(44.25,20){\makebox(0,0)[cc]{$011(4)$}}
 \put(3.75,17.5){\makebox(0,0)[cc]{$101(6)$}}
 \put(2.5,32.75){\makebox(0,0)[cc]{$010(3)$}}
 \put(29.5,28.25){\makebox(0,0)[cc]{$000(1)$}}
 \put(41.25,39.75){\makebox(0,0)[cc]{$111(8)$}}
 \put(15.75,7.25){\makebox(0,0)[cc]{$100(5)$}}
 \put(70.5,9.5){\makebox(0,0)[cc]{$111(8)$}}
 \put(101.75,10.25){\makebox(0,0)[cc]{$110(7)$}}
 \put(99,21.75){\makebox(0,0)[cc]{$100(5)$}}
 \put(63.5,21.75){\makebox(0,0)[cc]{$101(6)$}}
 \put(85.25,29.5){\makebox(0,0)[cc]{$011(4)$}}
 \put(98.25,39.5){\makebox(0,0)[cc]{$001(2)$}}
 \put(62.5,42.25){\makebox(0,0)[cc]{$010(3)$}}
 \put(52.75,1.5){\makebox(0,0)[cc]{Fig$. 2. $ Two CISTs in $AQ_3$}}
 \put(78.531,48.531){\circle*{2.062}}
 \put(77.5,50.75){\makebox(0,0)[cc]{$000(1)$}}
 \put(22.25,48.25){\makebox(0,0)[cc]{$001(2)$}}
 \multiput(77.25,33.25)(.125,.03125){8}{\line(1,0){.125}}
 \put(78.25,33.75){\line(0,1){15.25}}
 \put(22.75,31.5){\line(0,-1){22.25}}
 \put(77.5,32.5){\line(0,-1){21.75}}
 \multiput(23.25,10)(.0481818182,.0336363636){275}{\line(1,0){.0481818182}}
 \multiput(78,10.5)(.0518181818,.0336363636){275}{\line(1,0){.0518181818}}
 \end{picture}
 \end{center}

 \unitlength 1mm 
 \linethickness{0.4pt}
 \ifx\plotpoint\undefined\newsavebox{\plotpoint}\fi 
 \begin{picture}(151.5,120.25)(0,0)
 \put(20.75,10.5){\circle*{3.041}}
 \put(20.271,29.771){\circle*{3.041}}
 \put(20.771,48.771){\circle*{3.041}}
 \put(20.27,68.27){\circle*{3.041}}
 \put(20.77,87.02){\circle*{3.041}}
 \put(7.5,14.75){\circle*{1.581}}
 \put(33.541,16.791){\circle*{1.581}}
 \put(7.291,29.291){\circle*{1.581}}
 \put(33.082,35.582){\circle*{1.581}}
 \put(6.79,39.791){\circle*{1.581}}
 \put(7.041,52.791){\circle*{1.581}}
 \put(33.082,54.832){\circle*{1.581}}
 \put(6.541,72.041){\circle*{1.581}}
 \put(32.582,74.082){\circle*{1.581}}
 \put(33.082,93.082){\circle*{1.581}}
 \put(20.25,29.75){\line(0,-1){19.5}}
 \put(20.25,49){\line(0,-1){19.75}}
 \multiput(6.75,39.75)(.0460992908,-.0336879433){282}{\line(1,0){.0460992908}}
 \put(74.252,10.771){\circle*{3.041}}
 \put(73.772,30.041){\circle*{3.041}}
 \put(73.772,49.042){\circle*{3.041}}
 \put(73.772,68.541){\circle*{3.041}}
 \put(74.272,87.291){\circle*{3.041}}
 \put(61.252,10.771){\circle*{1.581}}
 \put(87.042,10.812){\circle*{1.581}}
 \put(60.793,34.562){\circle*{1.581}}
 \put(86.582,34.603){\circle*{1.581}}
 \put(60.541,18.811){\circle*{1.581}}
 \put(60.793,53.812){\circle*{1.581}}
 \put(86.582,53.853){\circle*{1.581}}
 \put(60.293,73.061){\circle*{1.581}}
 \put(86.082,73.102){\circle*{1.581}}
 \put(60.793,92.061){\circle*{1.581}}
 \put(86.582,92.102){\circle*{1.581}}
 \put(73.752,30.021){\line(0,-1){19.5}}
 \put(73.752,49.27){\line(0,-1){19.75}}
 \put(134.252,11.271){\circle*{3.041}}
 \put(133.772,30.541){\circle*{3.041}}
 \put(133.772,49.54){\circle*{3.041}}
 \put(133.772,69.04){\circle*{3.041}}
 \put(134.272,87.791){\circle*{3.041}}
 \put(121.252,11.271){\circle*{1.581}}
 \put(147.04,18.061){\circle*{1.581}}
 \put(120.793,36.81){\circle*{1.581}}
 \put(146.582,36.852){\circle*{1.581}}
 \put(120.54,20.311){\circle*{1.581}}
 \put(120.793,56.06){\circle*{1.581}}
 \put(146.582,56.101){\circle*{1.581}}
 \put(120.293,75.311){\circle*{1.581}}
 \put(120.793,94.311){\circle*{1.581}}
 \put(146.582,94.351){\circle*{1.581}}
 \put(133.752,30.52){\line(0,-1){19.5}}
 \put(133.752,49.77){\line(0,-1){19.75}}
 \multiput(73.5,48.75)(.03125,4.8125){8}{\line(0,1){4.8125}}
 \multiput(60.5,18.5)(.058823529,-.033613445){238}{\line(1,0){.058823529}}
 \put(134,87.75){\line(0,-1){39}}
 \put(134.54,99.041){\circle*{1.581}}
 \put(134,99){\line(0,-1){11}}
 \multiput(120.25,20.25)(.0535714286,-.0337301587){252}{\line(1,0){.0535714286}}
 \put(20,5.25){\makebox(0,0)[cc]{$0100 (5)$}}
 \put(5,16.75){\makebox(0,0)[cc]{$0111 (8)$}}
 \put(35.75,19.75){\makebox(0,0)[cc]{$1100 (13)$}}
 \put(28.5,28.5){\makebox(0,0)[cc]{$0101 (6)$}}
 \put(40.5,37.75){\makebox(0,0)[cc]{$0110 (7)$}}
 \put(4.75,24.75){\makebox(0,0)[cc]{$0010 (3)$}}
 \put(4.25,35){\makebox(0,0)[cc]{$0001 (2)$}}
 \put(28.75,47){\makebox(0,0)[cc]{$1010 (11)$}}
 \put(37.75,57){\makebox(0,0)[cc]{$1101 (14)$}}
 \put(5.25,55){\makebox(0,0)[cc]{$1110 (15)$}}
 \put(28.75,65.5){\makebox(0,0)[cc]{$1011 (12)$}}
 \put(38.25,76.25){\makebox(0,0)[cc]{$1001 (10)$}}
 \put(5,74.75){\makebox(0,0)[cc]{$0011 (4)$}}
 \multiput(20.75,86)(-.03125,-4.75){8}{\line(0,-1){4.75}}
 \put(28.75,86.25){\makebox(0,0)[cc]{$1000 (9)$}}
 \put(38.75,95.5){\makebox(0,0)[cc]{$1111 (16)$}}
 \put(74,6){\makebox(0,0)[cc]{$1111 (16)$}}
 \put(54.5,12.75){\makebox(0,0)[cc]{$1011 (12)$}}
 \put(61,20.5){\makebox(0,0)[cc]{$1101 (14)$}}
 \put(91,13.25){\makebox(0,0)[cc]{$0000 (1)$}}
 \put(81.75,27.5){\makebox(0,0)[cc]{$0111 (8)$}}
 \put(91.75,37){\makebox(0,0)[cc]{$0101 (6)$}}
 \put(60,36.75){\makebox(0,0)[cc]{$1000 (9)$}}
 \put(82.25,45.25){\makebox(0,0)[cc]{$0011 (4)$}}
 \put(91.75,56.25){\makebox(0,0)[cc]{$1100 (13)$}}
 \put(62.25,55.5){\makebox(0,0)[cc]{$0100 (5)$}}
 \put(81,64.5){\makebox(0,0)[cc]{$0001 (2)$}}
 \put(61.75,75.5){\makebox(0,0)[cc]{$0010 (3)$}}
 \put(90.25,75.75){\makebox(0,0)[cc]{$1110 (15)$}}
 \put(81.75,83.75){\makebox(0,0)[cc]{$1001 (10)$}}
 \put(61.75,95){\makebox(0,0)[cc]{$0110 (7)$}}
 \put(90.75,95.25){\makebox(0,0)[cc]{$1010 (11)$}}
 \put(133.75,5.5){\makebox(0,0)[cc]{$0110 (7)$}}
 \put(112.25,10.25){\makebox(0,0)[cc]{$0001 (2)$}}
 \put(112.25,20.25){\makebox(0,0)[cc]{$0100 (5)$}}
 \put(151.5,19.5){\makebox(0,0)[cc]{$0111 (8)$}}
 \put(141.5,25.75){\makebox(0,0)[cc]{$1110 (15)$}}
 \put(120.25,39.75){\makebox(0,0)[cc]{$1001 (10)$}}
 \put(151,39.75){\makebox(0,0)[cc]{$1111 (16)$}}
 \put(140.25,45){\makebox(0,0)[cc]{$1100 (13)$}}
 \put(120.5,58.5){\makebox(0,0)[cc]{$1000 (9)$}}
 \put(150.75,58.75){\makebox(0,0)[cc]{$1011 (12)$}}
 \put(143,68.75){\makebox(0,0)[cc]{$1101 (14)$}}
 \put(120,78){\makebox(0,0)[cc]{$0101 (6)$}}
 \put(140.5,82){\makebox(0,0)[cc]{$0010 (3)$}}
 \put(120.25,97.25){\makebox(0,0)[cc]{$0000 (1)$}}
 \put(134,102.25){\makebox(0,0)[cc]{$0011 (4)$}}
 \put(150.75,97.75){\makebox(0,0)[cc]{$1010 (11)$}}
 \put(73.25,.75){\makebox(0,0)[cc]{Fig$. 3.$ Three CISTs in $AQ_4$}}
 \put(21.25,87){\line(0,1){0}}
 \put(8,96.25){\makebox(0,0)[cc]{$0000 (1)$}}
 \put(7.5,29.5){\line(1,0){12.5}}
 \multiput(21,87)(.063172043,.033602151){186}{\line(1,0){.063172043}}
 \multiput(20.5,68.75)(.07885906,.033557047){149}{\line(1,0){.07885906}}
 \multiput(20.5,49.25)(.076219512,.033536585){164}{\line(1,0){.076219512}}
 \multiput(20,29.5)(.069892473,.033602151){186}{\line(1,0){.069892473}}
 \put(33,35.75){\line(0,1){0}}
 \multiput(20.75,11)(.070224719,.033707865){178}{\line(1,0){.070224719}}
 \multiput(6.5,72.25)(.103174603,-.033730159){126}{\line(1,0){.103174603}}
 \multiput(7,53)(.122596154,-.033653846){104}{\line(1,0){.122596154}}
 \multiput(7.5,14.75)(.109243697,-.033613445){119}{\line(1,0){.109243697}}
 \multiput(60.75,92)(.092198582,-.033687943){141}{\line(1,0){.092198582}}
 \multiput(74,87.5)(.088652482,.033687943){141}{\line(1,0){.088652482}}
 \multiput(60,73)(.104477612,-.03358209){134}{\line(1,0){.104477612}}
 \multiput(85.75,73)(-.089552239,-.03358209){134}{\line(-1,0){.089552239}}
 \multiput(60.5,53.5)(.08557047,-.033557047){149}{\line(1,0){.08557047}}
 \multiput(86.5,53.75)(-.090425532,-.033687943){141}{\line(-1,0){.090425532}}
 \put(60.25,34.25){\line(3,-1){13.5}}
 \multiput(86.25,34.5)(-.097014925,-.03358209){134}{\line(-1,0){.097014925}}
 \multiput(120.75,94.25)(.065920398,-.03358209){201}{\line(1,0){.065920398}}
 \multiput(146.5,94.5)(-.058894231,-.033653846){208}{\line(-1,0){.058894231}}
 \multiput(120.25,75.25)(.07253886,-.033678756){193}{\line(1,0){.07253886}}
 \multiput(120.75,56)(.06865285,-.033678756){193}{\line(1,0){.06865285}}
 \multiput(146.25,56.25)(-.063432836,-.03358209){201}{\line(-1,0){.063432836}}
 \multiput(120.25,36.75)(.06840796,-.03358209){201}{\line(1,0){.06840796}}
 \multiput(146.5,36.75)(-.066062176,-.033678756){193}{\line(-1,0){.066062176}}
 \put(121.25,11.25){\line(1,0){12.75}}
 \multiput(146.75,18)(-.063471503,-.033678756){193}{\line(-1,0){.063471503}}
 \put(61,10.75){\line(1,0){13.25}}
 \put(86.75,11){\line(-1,0){12.25}}
 \put(6.791,94.04){\circle*{1.581}}
 \multiput(6.5,94.25)(.066105769,-.033653846){208}{\line(1,0){.066105769}}
 \end{picture}

\vspace{2cm}
 {\bf Note} : To make things readable, we denote vertices of $AQ_5$ by using numbers $1,2,.....,32$. For example the vertex $00000$ will be denoted by $1$ and sometime will
 be written as $00000(1)$. Also, we will use short forms to denote edges of $AQ_5$, for example the edge $\langle 00000(1), 00001(2) \rangle$ will be denoted by $\langle 1, 2 \rangle$. For every $i$, $InV(T_i)$ denotes set of internal vertices of the tree $T_i$, for $1 \leq i \leq 4$. Vertex $v \in V(T_i)$ is called internal if $d_{T_i}(v) \geq 2$. \\
 Let $ V(AQ_5) =  \{  00000(1), 00001(2), 00010(3), 00011(4), 00100(5), 00101(6), 00110(7), 00111(8),$ 
 
 $\hspace{2.3cm} 01000(9), 01001(10), 01010(11), 01011(12), 01100(13), 01101(14), 01110(15), $
 
 $\hspace{2.3cm} 01111(16), 10000(17), 10001(18), 10010(19), 10011(20), 10100(21), 10101(22),
 $
 
 $ \hspace{2.3cm} 10110(23), 10111(24), 11000(25), 11001(26), 11010(27), 11011(28), 11100(29),$
 
 $\hspace{2.3cm}  11101(30), 11110(31), 11111(32) \}.$\\
 
 Now for $n = 5$, we construct four trees $T_1, T_2, T_3$ and $T_4$ as shown in Fig.$4(a)$, Fig.$4(b)$, Fig.$4(c)$ and Fig.$4(d)$ respectively.\\
\[ 
\unitlength 1mm 
\linethickness{0.4pt}
\ifx\plotpoint\undefined\newsavebox{\plotpoint}\fi 
\begin{picture}(100.798,110.397)(0,0)
\put(33.706,13.409){\circle*{2.268}}
\put(33.48,29.511){\circle*{2.268}}
\put(33.48,61.943){\circle*{2.268}}
\put(33.706,78.954){\circle*{2.268}}
\put(33.48,95.057){\circle*{2.268}}
\put(33.706,111.386){\circle*{2.268}}
\put(33.253,14.316){\line(0,1){15.422}}
\put(33.253,29.965){\line(0,1){16.557}}
\put(33.253,46.522){\line(0,1){16.102}}
\put(33.253,62.624){\line(0,1){17.01}}
\put(33.253,79.634){\line(0,1){16.103}}
\put(33.253,95.737){\line(0,1){16.556}}
\put(50.716,11.594){\circle*{1.434}}
\put(50.3,19.115){\circle*{1.434}}
\put(15.109,11.404){\circle*{1.434}}
\put(14.692,18.926){\circle*{1.434}}
\put(50.489,27.017){\circle*{1.434}}
\put(50.072,34.538){\circle*{1.434}}
\put(14.881,26.826){\circle*{1.434}}
\put(14.465,34.348){\circle*{1.434}}
\put(51.434,46.331){\circle*{1.434}}
\put(13.522,57.671){\circle*{1.434}}
\put(13.105,65.192){\circle*{1.434}}
\put(52.34,62.434){\circle*{1.434}}
\put(51.623,78.727){\circle*{2.268}}
\put(33.933,78.954){\line(1,0){17.463}}
\put(13.066,75.588){\circle*{1.434}}
\put(12.65,83.11){\circle*{1.434}}
\put(66.138,72.186){\circle*{1.434}}
\put(65.268,79.028){\circle*{1.434}}
\put(65.91,87.608){\circle*{1.434}}
\put(49.129,91.882){\circle*{1.434}}
\put(48.712,99.402){\circle*{1.434}}
\put(13.522,91.691){\circle*{1.434}}
\put(13.104,99.213){\circle*{1.434}}
\put(51.433,118.68){\circle*{1.434}}
\put(51.434,111.196){\circle*{1.434}}
\put(13.785,111.877){\circle*{1.434}}
\multiput(33.933,13.635)(.30660741,-.0336){54}{\line(1,0){.30660741}}
\put(33.706,13.862){\line(3,1){16.33}}
\multiput(33.706,29.738)(.20439506,-.0336){81}{\line(1,0){.20439506}}
\multiput(33.48,29.965)(.116591549,.033543662){142}{\line(1,0){.116591549}}
\put(33.933,46.522){\line(1,0){17.237}}
\put(33.48,62.398){\line(1,0){18.824}}
\multiput(33.933,95.283)(.148205941,-.033679208){101}{\line(1,0){.148205941}}
\multiput(33.706,95.51)(.132132174,.033523478){115}{\line(1,0){.132132174}}
\put(33.933,111.613){\line(1,0){17.237}}
\multiput(33.706,112.066)(.086455446,.033683168){202}{\line(1,0){.086455446}}
\multiput(33.48,95.283)(-.170571901,.033738843){121}{\line(-1,0){.170571901}}
\multiput(33.253,95.057)(-.197607921,-.033687129){101}{\line(-1,0){.197607921}}
\multiput(33.48,78.954)(-.185384348,.033530435){115}{\line(-1,0){.185384348}}
\multiput(33.48,78.727)(-.206590099,-.033687129){101}{\line(-1,0){.206590099}}
\multiput(32.799,62.17)(-.210088421,.033423158){95}{\line(-1,0){.210088421}}
\multiput(33.48,61.717)(-.1577,-.0336625){128}{\line(-1,0){.1577}}
\multiput(33.253,29.965)(-.142797037,.0336){135}{\line(-1,0){.142797037}}
\multiput(33.253,29.738)(-.20618182,-.03350909){88}{\line(-1,0){.20618182}}
\multiput(33.48,13.409)(-.112728994,.033547929){169}{\line(-1,0){.112728994}}
\multiput(33.706,13.409)(-.30859016,-.03346885){61}{\line(-1,0){.30859016}}
\multiput(51.397,79.407)(.055815625,.033665625){256}{\line(1,0){.055815625}}
\put(51.85,78.5){\line(2,-1){14.062}}
\put(52.078,78.954){\line(1,0){12.927}}
\put(33.026,8.192){\makebox(0,0)[cc]{$11110(31)$}}
\put(60.015,11.367){\makebox(0,0)[cc]{$11111(32)$}}
\put(59.201,19.306){\makebox(0,0)[cc]{$11101(30)$}}
\put(6.171,10.687){\makebox(0,0)[cc]{$11100(29)$}}
\put(6.398,19.078){\makebox(0,0)[cc]{$10110(23)$}}
\put(25.676,24.749){\makebox(0,0)[cc]{$11010(27)$}}
\put(60.108,27.017){\makebox(0,0)[cc]{$10101(22)$}}
\put(59.108,34.954){\makebox(0,0)[cc]{$10010(19)$}}
\put(61.15,46.294){\makebox(0,0)[cc]{$01000(9)$}}
\put(12.614,36.542){\makebox(0,0)[cc]{$01010(11)$}}
\put(6.171,26.563){\makebox(0,0)[cc]{$11001(26)$}}
\put(61.922,62.398){\makebox(0,0)[cc]{$10111(24)$}}
\put(5.398,57.862){\makebox(0,0)[cc]{$10011(20)$}}
\put(10.8,66.933){\makebox(0,0)[cc]{$10001(18)$}}
\put(24.542,46.522){\makebox(0,0)[cc]{$11000(25)$}}
\put(41.964,58.315){\makebox(0,0)[cc]{$10000(17)$}}
\put(26.449,73.057){\makebox(0,0)[cc]{$00000(1)$ }}
\put(47.994,73.51){\makebox(0,0)[cc]{$01111(16)$}}
\put(76.798,71.47){\makebox(0,0)[cc]{$01100(13)$}}
\put(74.758,78.954){\makebox(0,0)[cc]{$01101(14)$}}
\put(76.345,87.572){\makebox(0,0)[cc]{$01110(15)$}}
\put(10.573,77.82){\makebox(0,0)[cc]{$00010(3)$ }}
\put(10.346,84.397){\makebox(0,0)[cc]{$00001(2)$}}
\put(39.15,90.52){\makebox(0,0)[cc]{$00100(5)$}}
\put(58.52,92.562){\makebox(0,0)[cc]{$10100(21)$}}
\put(59.067,100.046){\makebox(0,0)[cc]{$11011(28)$}}
\put(11.026,93.469){\makebox(0,0)[cc]{$00011(4)$ }}
\put(9.439,101.18){\makebox(0,0)[cc]{$01011(12)$}}
\put(26.902,114.077){\makebox(0,0)[cc]{$00110(7)$}}
\put(11.026,108.21){\makebox(0,0)[cc]{$00101(6)$}}
\put(59.334,110.706){\makebox(0,0)[cc]{$00111(8)$}}
\put(60.428,118.643){\makebox(0,0)[cc]{$01001(10)$}}
\put(32.572,1.615){\makebox(0,0)[cc]{Fig. 4(a). Spanning Tree $T_1$ in $AQ_5$}}
\put(33.253,111.84){\line(-1,0){19.505}}
\put(33.134,46.134){\circle*{2.268}}
\end{picture}
\]
 
\unitlength 1mm 
\linethickness{0.4pt}
\ifx\plotpoint\undefined\newsavebox{\plotpoint}\fi 
\begin{picture}(81.25,111.665)(0,0)
\put(51.65,9.4){\circle*{2.332}}
\put(51.816,23.966){\circle*{2.332}}
\put(51.816,37.366){\circle*{2.332}}
\put(51.983,51.933){\circle*{2.332}}
\put(52.216,67.566){\circle*{2.332}}
\put(52.383,82.133){\circle*{2.332}}
\put(52.383,95.533){\circle*{2.332}}
\put(73.65,8.2){\circle*{1.265}}
\put(73.683,15.233){\circle*{1.265}}
\put(30.283,9.233){\circle*{1.265}}
\put(30.316,16.266){\circle*{1.265}}
\put(31.416,36.766){\circle*{2.332}}
\put(15.683,30.832){\circle*{1.265}}
\put(15.716,37.066){\circle*{1.265}}
\put(15.882,44.433){\circle*{1.265}}
\put(72.883,31.833){\circle*{1.265}}
\put(72.916,38.866){\circle*{1.265}}
\put(73.082,45.433){\circle*{1.265}}
\put(73.082,23.833){\circle*{1.265}}
\put(32.482,50.433){\circle*{1.265}}
\put(32.649,56.998){\circle*{1.265}}
\put(72.882,55.433){\circle*{1.265}}
\put(32.683,67.632){\circle*{1.265}}
\put(73.082,67.832){\circle*{1.265}}
\put(73.45,77.633){\circle*{1.265}}
\put(73.483,84.666){\circle*{1.265}}
\put(32.283,78.466){\circle*{1.265}}
\put(32.316,85.498){\circle*{1.265}}
\put(32.283,95.832){\circle*{1.265}}
\put(32.083,105.033){\circle*{1.265}}
\put(73.283,95.832){\circle*{1.265}}
\put(73.083,105.033){\circle*{1.265}}
\put(51.85,23.8){\line(0,-1){14.2}}
\put(51.85,37.2){\line(0,-1){13.4}}
\put(51.65,52){\line(0,-1){14.2}}
\put(52.25,67.4){\line(0,-1){16.6}}
\put(52.45,82.4){\line(0,-1){14.6}}
\put(52.25,95.4){\line(0,-1){13.2}}
\put(31.45,36.6){\line(0,1){.2}}
\put(31.05,37.2){\line(1,0){20.4}}
\multiput(51.65,9.6)(.127906977,.03372093){172}{\line(1,0){.127906977}}
\multiput(52.05,9.6)(.51428571,-.03333333){42}{\line(1,0){.51428571}}
\multiput(51.25,9.6)(-.104950495,.033663366){202}{\line(-1,0){.104950495}}
\put(51.65,24){\line(1,0){21}}
\multiput(31.45,36.8)(-.068103448,.03362069){232}{\line(-1,0){.068103448}}
\put(31.05,37){\line(-1,0){15.8}}
\multiput(31.05,36.4)(-.090697674,-.03372093){172}{\line(-1,0){.090697674}}
\multiput(52.25,37.6)(.11744186,-.03372093){172}{\line(1,0){.11744186}}
\multiput(51.85,37.6)(.4952381,.03333333){42}{\line(1,0){.4952381}}
\multiput(51.45,37.6)(.092241379,.03362069){232}{\line(1,0){.092241379}}
\multiput(51.65,52)(-.12885906,.033557047){149}{\line(-1,0){.12885906}}
\put(51.65,51.6){\line(0,1){.4}}
\multiput(51.65,52)(-.40833333,-.03333333){48}{\line(-1,0){.40833333}}
\multiput(51.85,51.8)(.184070796,.033628319){113}{\line(1,0){.184070796}}
\put(52.05,67.6){\line(1,0){20.8}}
\put(51.85,67.4){\line(-1,0){19.4}}
\multiput(52.05,82.4)(.27179487,.03333333){78}{\line(1,0){.27179487}}
\multiput(52.45,82.2)(.154744526,-.033576642){137}{\line(1,0){.154744526}}
\multiput(52.65,82.2)(-.214736842,.033684211){95}{\line(-1,0){.214736842}}
\multiput(52.25,82)(-.190654206,-.03364486){107}{\line(-1,0){.190654206}}
\multiput(52.45,95.6)(.0731182796,.0336917563){279}{\line(1,0){.0731182796}}
\put(52.65,95.8){\line(1,0){20.6}}
\put(52.45,96){\line(-1,0){20.4}}
\multiput(52.05,95.8)(-.0724014337,.0336917563){279}{\line(-1,0){.0724014337}}
\put(51.25,4.8){\makebox(0,0)[cc]{$11101(30)$}}
\put(84.05,7.8){\makebox(0,0)[cc]{$11111(32)$}}
\put(83.85,16){\makebox(0,0)[cc]{$10010(19)$}}
\put(18.85,7.8){\makebox(0,0)[cc]{$11010(27)$}}
\put(21.25,15.6){\makebox(0,0)[cc]{$01101(14)$}}
\put(82.25,24){\makebox(0,0)[cc]{$11110(31)$}}
\put(43.25,34.6){\makebox(0,0)[cc]{$10110(23)$}}
\put(81.85,32){\makebox(0,0)[cc]{$10101(22)$}}
\put(82.25,38.6){\makebox(0,0)[cc]{$10001(18)$}}
\put(82.85,45.8){\makebox(0,0)[cc]{$00110(7)$}}
\put(33.25,40.2){\makebox(0,0)[cc]{$10100(21)$}}
\put(6.45,30){\makebox(0,0)[cc]{$10000(17)$}}
\put(6.45,36.8){\makebox(0,0)[cc]{$10011(20)$}}
\put(6.25,44.4){\makebox(0,0)[cc]{$11011(28)$}}
\put(62.05,50){\makebox(0,0)[cc]{$01001(10)$}}
\put(82.05,55.4){\makebox(0,0)[cc]{$01110(15)$}}
\put(23.65,50.2){\makebox(0,0)[cc]{$01000(9)$}}
\put(23.05,56.8){\makebox(0,0)[cc]{$01010(11)$}}
\put(60.25,65){\makebox(0,0)[cc]{$00001(2)$}}
\put(82.45,68){\makebox(0,0)[cc]{$00101(6)$}}
\put(24.65,66.8){\makebox(0,0)[cc]{$00010(3)$}}
\put(44.85,77.2){\makebox(0,0)[cc]{$00011(4)$}}
\put(83.25,84.8){\makebox(0,0)[cc]{$01011(12)$}}
\put(83.45,77.6){\makebox(0,0)[cc]{$01100(13)$}}
\put(23.65,85.4){\makebox(0,0)[cc]{$00000(1)$}}
\put(23.65,78.2){\makebox(0,0)[cc]{$11100(29)$}}
\put(44.45,93.2){\makebox(0,0)[cc]{$00111(8)$}}
\put(82.25,105.6){\makebox(0,0)[cc]{$00100(5)$}}
\put(82.25,96){\makebox(0,0)[cc]{$01111(16)$}}
\put(23.25,105){\makebox(0,0)[cc]{$10111(24)$}}
\put(23.25,95.4){\makebox(0,0)[cc]{$11000(25)$}}
\put(50.65,1.0){\makebox(0,0)[cc]{Fig. 4(b). Spanning Tree $T_2$ in $AQ_5$}}
\put(42.25,22.6){\makebox(0,0)[cc]{$11001(26)$}}
\put(29.85,9.2){\line(1,0){21.6}}
\end{picture}

\[ 
\unitlength 1mm 
\linethickness{0.4pt}
\ifx\plotpoint\undefined\newsavebox{\plotpoint}\fi 
\begin{picture}(92.73,115.739)(0,0)
\put(48.09,10.92){\circle*{2.036}}
\put(47.668,31.558){\circle*{2.036}}
\put(47.91,31.08){\line(0,-1){20.34}}
\put(47.73,54.418){\circle*{2.036}}
\put(47.308,75.056){\circle*{2.036}}
\put(47.55,74.578){\line(0,-1){20.34}}
\put(47.488,94.738){\circle*{2.036}}
\put(47.55,94.44){\line(0,-1){19.08}}
\put(47.73,54.12){\line(0,-1){22.5}}
\put(28.408,54.418){\circle*{2.036}}
\put(28.228,74.938){\circle*{2.036}}
\put(68.008,74.938){\circle*{2.036}}
\put(29.19,14.7){\circle*{1.138}}
\put(29.22,8.07){\circle*{1.138}}
\put(67.74,10.95){\circle*{1.138}}
\put(67.38,31.47){\circle*{1.138}}
\put(66.66,55.379){\circle*{1.138}}
\put(66.689,48.749){\circle*{1.138}}
\put(66.84,61.17){\circle*{1.138}}
\put(11.759,53.579){\circle*{1.138}}
\put(11.788,46.949){\circle*{1.138}}
\put(11.939,59.37){\circle*{1.138}}
\put(12.12,76.439){\circle*{1.138}}
\put(12.149,69.809){\circle*{1.138}}
\put(12.3,82.23){\circle*{1.138}}
\put(12.66,87.63){\circle*{1.138}}
\put(60.54,84.75){\circle*{1.138}}
\put(85.38,75.719){\circle*{1.138}}
\put(85.409,69.089){\circle*{1.138}}
\put(85.56,81.51){\circle*{1.138}}
\put(31.38,100.379){\circle*{1.138}}
\put(31.409,93.749){\circle*{1.138}}
\put(31.56,106.17){\circle*{1.138}}
\put(67.02,102.539){\circle*{1.138}}
\put(67.049,95.909){\circle*{1.138}}
\multiput(29.19,14.88)(.157118644,-.033559322){118}{\line(1,0){.157118644}}
\multiput(29.19,7.86)(.21767442,.03348837){86}{\line(1,0){.21767442}}
\put(48.27,11.1){\line(1,0){19.26}}
\put(47.73,31.44){\line(1,0){19.8}}
\put(47.73,31.44){\line(-1,0){19.08}}
\multiput(47.73,54.3)(.116273292,-.033540373){161}{\line(1,0){.116273292}}
\multiput(47.37,54.3)(.8672727,.0327273){22}{\line(1,0){.8672727}}
\multiput(66.63,60.96)(-.104835165,-.033626374){182}{\line(-1,0){.104835165}}
\put(28.47,54.3){\line(1,0){18.9}}
\multiput(11.91,59.16)(.122238806,-.03358209){134}{\line(1,0){.122238806}}
\multiput(11.73,53.58)(.7363636,.0327273){22}{\line(1,0){.7363636}}
\multiput(11.91,47.1)(.080689655,.033694581){203}{\line(1,0){.080689655}}
\multiput(12.27,69.78)(.114206897,.033517241){145}{\line(1,0){.114206897}}
\multiput(12.09,76.26)(.43105263,-.03315789){38}{\line(1,0){.43105263}}
\multiput(12.27,82.2)(.080095694,-.033588517){209}{\line(1,0){.080095694}}
\multiput(12.63,87.6)(.0446280992,-.0337190083){363}{\line(1,0){.0446280992}}
\put(28.47,75){\line(1,0){19.26}}
\put(47.55,75){\line(1,0){20.52}}
\multiput(68.07,75.18)(.092406417,.03368984){187}{\line(1,0){.092406417}}
\multiput(67.89,75.36)(1.538182,.032727){11}{\line(1,0){1.538182}}
\multiput(67.71,75.18)(.095934066,-.033626374){182}{\line(1,0){.095934066}}
\multiput(47.37,75.18)(.045971223,.0336690647){278}{\line(1,0){.045971223}}
\multiput(47.73,94.62)(.08373913,.033652174){230}{\line(1,0){.08373913}}
\multiput(47.55,94.8)(.72,.0333333){27}{\line(1,0){.72}}
\multiput(31.53,106.32)(.0483987915,-.0337160121){331}{\line(1,0){.0483987915}}
\multiput(31.35,100.2)(.105677419,-.033677419){155}{\line(1,0){.105677419}}
\multiput(31.17,93.9)(.7281818,.0327273){22}{\line(1,0){.7281818}}
\put(49.55,7.86){\makebox(0,0)[cc]{$10010(19)$}}
\put(76.55,11.1){\makebox(0,0)[cc]{$10101(22)$}}
\put(20.07,14.52){\makebox(0,0)[cc]{$10110(23)$}}
\put(20.53,7.5){\makebox(0,0)[cc]{$10000(17)$}}
\put(40.13,28.28){\makebox(0,0)[cc]{$10011(20)$}}
\put(76.47,30.36){\makebox(0,0)[cc]{$10111(24)$}}
\put(20.25,30.18){\makebox(0,0)[cc]{$00011(4)$}}
\put(39.51,50.96){\makebox(0,0)[cc]{$11100(29)$}}
\put(76.21,61.14){\makebox(0,0)[cc]{$10100(21)$}}
\put(76.57,55.38){\makebox(0,0)[cc]{$11000(25)$}}
\put(76.57,48.18){\makebox(0,0)[cc]{$11101(30)$}}
\put(28.83,58.08){\makebox(0,0)[cc]{$11011(28)$}}
\put(2.61,59.52){\makebox(0,0)[cc]{$11001(26)$}}
\put(2.97,53.4){\makebox(0,0)[cc]{$11010(27)$}}
\put(2.25,46.38){\makebox(0,0)[cc]{$11111(32)$}}
\put(39.33,72.48){\makebox(0,0)[cc]{$01100(13)$}}
\put(69.27,85.26){\makebox(0,0)[cc]{$00100(5)$}}
\put(65.01,70.94){\makebox(0,0)[cc]{$01101(14)$}}
\put(94.73,81.66){\makebox(0,0)[cc]{$00010(3)$}}
\put(94.65,75.36){\makebox(0,0)[cc]{$00101(6)$}}
\put(94.93,68.7){\makebox(0,0)[cc]{$01001(10)$}}
\put(34.27,78.42){\makebox(0,0)[cc]{$01110(15)$}}
\put(3.71,87.96){\makebox(0,0)[cc]{$00001(2)$ }}
\put(3.61,82.02){\makebox(0,0)[cc]{$00110(7)$}}
\put(3.61,75.9){\makebox(0,0)[cc]{$10001(18)$}}
\put(3.97,69.42){\makebox(0,0)[cc]{$11110(31)$}}
\put(56.77,92.1){\makebox(0,0)[cc]{$01000(9)$}}
\put(76.93,102.72){\makebox(0,0)[cc]{$00000(1)$ }}
\put(76.29,95.7){\makebox(0,0)[cc]{$00111(8)$}}
\put(23.59,106.32){\makebox(0,0)[cc]{$01010(11)$}}
\put(23.41,99.84){\makebox(0,0)[cc]{$01011(12)$}}
\put(23.23,93.54){\makebox(0,0)[cc]{$01111(16)$}}
\put(47.01,2.46){\makebox(0,0)[cc]{Fig. 4(c). Spanning Tree $T_3$ in $AQ_5$}}
\put(28.86,31.65){\circle*{1.138}}
\end{picture}
\]

\[
\unitlength 1mm 
\linethickness{0.4pt}
\ifx\plotpoint\undefined\newsavebox{\plotpoint}\fi 
\begin{picture}(100.7,110.48)(0,0)
\put(56.42,11.435){\circle*{2.305}}
\put(56.133,32.388){\circle*{2.305}}
\put(56.6,53.808){\circle*{2.305}}
\put(56.313,74.76){\circle*{2.305}}
\put(55.593,95.028){\circle*{2.305}}
\put(35.613,53.988){\circle*{2.305}}
\put(35.433,74.148){\circle*{2.305}}
\put(78.633,74.688){\circle*{2.305}}
\put(72.62,18.095){\circle*{1.61}}
\put(72.885,7.92){\circle*{1.61}}
\put(37.065,10.62){\circle*{1.61}}
\put(72.885,40.235){\circle*{1.61}}
\put(73.15,30.06){\circle*{1.61}}
\put(35.445,32.04){\circle*{1.61}}
\put(72.705,62.015){\circle*{1.61}}
\put(72.97,51.84){\circle*{1.61}}
\put(22.305,39.06){\circle*{1.61}}
\put(16.005,46.62){\circle*{1.61}}
\put(12.945,53.28){\circle*{1.61}}
\put(13.665,61.02){\circle*{1.61}}
\put(19.425,67.14){\circle*{1.61}}
\put(94.125,85.86){\circle*{1.61}}
\put(95.565,74.88){\circle*{1.61}}
\put(93.405,66.24){\circle*{1.61}}
\put(77.745,104.675){\circle*{1.61}}
\put(77.83,95.22){\circle*{1.61}}
\put(38.865,104.675){\circle*{1.61}}
\put(39.13,94.5){\circle*{1.61}}
\put(17.265,78.3){\circle*{1.61}}
\put(25.725,87.3){\circle*{1.61}}
\put(56.24,11.615){\line(0,1){20.88}}
\put(56.24,53.735){\line(0,-1){21.42}}
\put(56.24,74.435){\line(0,-1){20.7}}
\put(56.24,74.075){\line(0,1){0}}
\put(78.74,74.615){\line(-1,0){22.68}}
\multiput(56.24,53.915)(.0638247012,.0337051793){251}{\line(1,0){.0638247012}}
\multiput(56.42,53.735)(.252,-.03323077){65}{\line(1,0){.252}}
\multiput(78.56,74.435)(.0588047809,-.0337051793){251}{\line(1,0){.0588047809}}
\put(78.2,74.975){\line(1,0){17.46}}
\multiput(78.56,74.975)(.0469325153,.0336809816){326}{\line(1,0){.0469325153}}
\put(55.52,95.135){\line(1,0){22.5}}
\multiput(54.98,95.495)(.0784775087,.033633218){289}{\line(1,0){.0784775087}}
\put(39.32,94.415){\line(1,0){16.38}}
\multiput(38.6,105.035)(.0569387755,-.0336734694){294}{\line(1,0){.0569387755}}
\multiput(25.64,87.215)(.033633218,-.0435986159){289}{\line(0,-1){.0435986159}}
\multiput(17,78.395)(.157699115,-.033451327){113}{\line(1,0){.157699115}}
\multiput(19.34,67.055)(.0422691293,-.0337203166){379}{\line(1,0){.0422691293}}
\multiput(13.4,60.755)(.107655502,-.033588517){209}{\line(1,0){.107655502}}
\put(12.68,53.195){\line(1,0){22.68}}
\multiput(15.92,46.895)(.108791209,.033626374){182}{\line(1,0){.108791209}}
\multiput(22.22,39.155)(.0336945813,.0345812808){406}{\line(0,1){.0345812808}}
\put(35.54,53.915){\line(1,0){20.52}}
\put(35.18,31.775){\line(1,0){20.34}}
\put(36.98,10.715){\line(1,0){19.26}}
\multiput(56.78,11.615)(.148037383,-.03364486){107}{\line(1,0){.148037383}}
\multiput(56.24,11.435)(.081818182,.033636364){198}{\line(1,0){.081818182}}
\multiput(56.06,32.675)(.228,-.0336){75}{\line(1,0){.228}}
\multiput(56.24,32.315)(.0687804878,.0336585366){246}{\line(1,0){.0687804878}}
\put(35.9,74.615){\line(1,0){20.34}}
\put(55.88,94.595){\line(0,-1){19.62}}
\put(54.44,6.935){\makebox(0,0)[cc]{$11111(32)$}}
\put(82.1,7.475){\makebox(0,0)[cc]{$11100(29)$}}
\put(82.92,18.275){\makebox(0,0)[cc]{$10000(17)$}}
\put(28.78,10.175){\makebox(0,0)[cc]{$00000(1)$}}
\put(47.84,28.355){\makebox(0,0)[cc]{$10111(24)$}}
\put(82.56,40.595){\makebox(0,0)[cc]{$10110(23)$}}
\put(82.46,30.155){\makebox(0,0)[cc]{$11000(25)$}}
\put(27.5,30.695){\makebox(0,0)[cc]{$01000(9)$}}
\put(82.64,51.935){\makebox(0,0)[cc]{$10100(21)$}}
\put(82.38,62.555){\makebox(0,0)[cc]{$11101(30)$}}
\put(7.86,67.595){\makebox(0,0)[cc]{$00001(2)$}}
\put(3.82,60.935){\makebox(0,0)[cc]{$10010(19)$}}
\put(3.74,53.195){\makebox(0,0)[cc]{$10011(20)$}}
\put(5.26,45.635){\makebox(0,0)[cc]{$11001(26)$}}
\put(11.38,38.435){\makebox(0,0)[cc]{$11110(31)$}}
\put(65.94,70.915){\makebox(0,0)[cc]{$01010(11)$}}
\put(41.7,76.855){\makebox(0,0)[cc]{$00010(3)$}}
\put(16.16,87.755){\makebox(0,0)[cc]{$00011(4)$}}
\put(8.34,78.395){\makebox(0,0)[cc]{$00110(7)$}}
\put(74.42,77.855){\makebox(0,0)[cc]{$00101(6)$}}
\put(102.8,85.955){\makebox(0,0)[cc]{$00100(5)$}}
\put(104.7,74.975){\makebox(0,0)[cc]{$00111(8)$}}
\put(102.72,66.155){\makebox(0,0)[cc]{$11010(27)$}}
\put(65.64,90.815){\makebox(0,0)[cc]{$01011(12)$}}
\put(29.48,104.315){\makebox(0,0)[cc]{$01001(10)$}}
\put(30.02,96.235){\makebox(0,0)[cc]{$01100(13)$}}
\put(87.96,105.035){\makebox(0,0)[cc]{$01111(16)$}}
\put(87.96,94.595){\makebox(0,0)[cc]{$11011(28)$}}
\put(48.945,84.6){\circle*{1.61}}
\put(48.585,64.26){\circle*{1.61}}
\multiput(48.68,84.515)(.03369863,-.044383562){219}{\line(0,-1){.044383562}}
\multiput(48.5,63.995)(.033652174,.045391304){230}{\line(0,1){.045391304}}
\put(41.6,48.335){\makebox(0,0)[cc]{$10001(18)$}}
\put(61.74,48.695){\makebox(0,0)[cc]{$10101(22)$}}
\put(39.2,85.415){\makebox(0,0)[cc]{$01101(14)$}}
\put(38.56,63.095){\makebox(0,0)[cc]{$01110(15)$}}
\put(55.88,1.175){\makebox(0,0)[cc]{Fig. 4(d). Spanning Tree $T_4$ in $AQ_5$}}
\end{picture}
\]

Here, we observe that\\
$InV(T_1)= \{1, 5, 7, 16, 17, 25, 27, 31\}$,\\
$InV(T_2) = \{2, 4, 8, 10, 21, 23, 26, 30\}$ \\
$InV(T_3)= \{9, 13, 14, 15, 19, 20, 28, 29\}$	\\
$InV(T_4)= \{3, 6, 11, 12, 18, 22, 24, 32\}$	\\
are such that $InV(T_i) \cap InV(T_j) = \phi $, for $i \neq j$ and $ 1 \leq i, j \leq 4$\\
Also, observe\\
$E(T_1) = \{\langle 1, 2   \rangle,  \langle 1, 3   \rangle,  \langle 1, 5    \rangle,  \langle 1, 16   \rangle,  \langle 1, 17   \rangle,  \langle  4, 5  \rangle,  \langle 5, 7   \rangle,  \langle 5, 12   \rangle,  \langle 5,21   \rangle,  \langle 5, 28   \rangle, \langle 6, 7   \rangle,  \langle 7, 8   \rangle, $

$\hspace{1cm}   \langle 7, 10   \rangle,  \langle 9, 25   \rangle,  \langle 11, 27   \rangle,  \langle 13, 16   \rangle,  \langle 14, 16   \rangle,  \langle 15, 16   \rangle,  \langle 17. 18   \rangle,  \langle 17, 20   \rangle,  \langle 17, 24   \rangle,  \langle 17, 25   \rangle,  $

$\hspace{1cm} \langle 19, 27   \rangle,  \langle 22, 27   \rangle,  \langle 23, 31   \rangle,  \langle 25, 27   \rangle,  \langle 26, 27   \rangle,  \langle 27, 31   \rangle,  \langle 29, 31   \rangle,  \langle 30, 31   \rangle,  \langle  31, 32  \rangle  \}     $\\
$E(T_2) = \{\langle 1, 4   \rangle,  \langle  2, 3  \rangle,  \langle  2, 4  \rangle,  \langle 2, 6   \rangle,  \langle 2, 10   \rangle,  \langle  4, 8  \rangle,  \langle 4, 12   \rangle,  \langle 4, 13   \rangle,  \langle 4, 29   \rangle,  \langle 5, 8   \rangle,  \langle 7, 23   \rangle, \langle 8, 16   \rangle,  $

$\hspace{1cm} \langle 8, 24   \rangle, \langle 8, 25   \rangle,  \langle 9, 10   \rangle,  \langle 10, 11   \rangle,  \langle 10, 15   \rangle,  \langle 10, 23   \rangle,  \langle 14, 30   \rangle,  \langle 17, 21   \rangle,  \langle 18, 23   \rangle,  \langle 19, 30   \rangle,  $

$\hspace{1cm} \langle 20, 21   \rangle, \langle 21, 23   \rangle,  \langle 21, 28   \rangle,  \langle 22, 23   \rangle,  \langle 23, 26   \rangle,  \langle 26, 30   \rangle,  \langle 26, 31   \rangle,  \langle  27, 30  \rangle,  \langle 30, 32   \rangle   \}     $\\
$E(T_3) = \{\langle 1, 9   \rangle,  \langle 2, 15   \rangle,  \langle  3, 14  \rangle,  \langle 4, 20   \rangle,  \langle  5, 13  \rangle,  \langle 6, 14  \rangle,  \langle 7, 15   \rangle,  \langle 8, 9   \rangle,  \langle 9, 11   \rangle,  \langle 9, 12   \rangle,  \langle 9, 13   \rangle, $

$ \hspace{1cm} \langle 9, 16   \rangle,  \langle 10, 14   \rangle,  \langle 13, 14   \rangle,  \langle 13, 15   \rangle,  \langle 13, 29   \rangle,  \langle 15, 18   \rangle,  \langle 15, 31   \rangle,  \langle 17, 19   \rangle,  \langle 19, 20   \rangle,  \langle 19, 22   \rangle,  $

$ \hspace{1cm} \langle 19, 23   \rangle,  \langle 20, 24   \rangle,   \langle 20, 29   \rangle,  \langle 21, 29   \rangle,  \langle 25, 29   \rangle,  \langle 26, 28   \rangle,  \langle 27, 28   \rangle,  \langle 28, 29   \rangle,  \langle 28, 32   \rangle,  \langle  29, 30  \rangle  \}     $\\
$E(T_4) = \{\langle 1, 32   \rangle,  \langle 2, 18   \rangle,  \langle 3, 4   \rangle,  \langle 3, 7    \rangle,  \langle 3, 11   \rangle,  \langle 5, 6   \rangle,  \langle 6, 8   \rangle,  \langle 6, 11   \rangle,  \langle 6, 27   \rangle,  \langle 9, 24   \rangle,  \langle  10, 12  \rangle, \langle 11, 12   \rangle,$

$ \hspace{1cm}   \langle 11, 14   \rangle,  \langle 11, 15    \rangle,  \langle 11, 22   \rangle,  \langle 12, 13   \rangle,  \langle 12, 16   \rangle,  \langle 12, 28   \rangle,  \langle 17, 32   \rangle,  \langle 18, 19   \rangle,  \langle 18, 20   \rangle,  $

$\hspace{1cm} \langle 18, 22   \rangle,  \langle 18, 26,  \rangle, \langle 18, 31   \rangle,  \langle 21, 22   \rangle,  \langle 22, 24   \rangle,  \langle 22, 30   \rangle,  \langle 23, 24   \rangle,  \langle 24, 25   \rangle,  \langle  24, 32  \rangle,  \langle  29, 32  \rangle   \}     $\\
are such that $ E(T_i) \cap E(T_j) = \phi$, for $i \neq j$ and $ 1 \leq i, j \leq 4$.\\ 

According to Lemma \ref{lm1}, above constructed trees $T_1, T_2, T_3$ and $T_4$ are CISTs on $AQ_5$.\\

\begin{theorem} Let $n\geq 6$ be an integer. There exist four completely independent spanning trees of which two are with diameter $2n-3$ and two are with diameter $2n-5$, in augmented cube $AQ_n$. 
\end{theorem}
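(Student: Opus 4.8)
The plan is to argue by induction on $n\geq 6$, taking the explicit four CISTs $T_1,T_2,T_3,T_4$ of $AQ_5$ constructed above as the seed and using Corollary \ref{lm3} as the inductive engine. The one point that makes the diameter count come out right is that at each step the two copies of a tree must be joined along a \emph{center} vertex, not along an arbitrary internal vertex. As base data one verifies from the edge lists, by tracing longest paths, that $\mathrm{diam}(T_1)=\mathrm{diam}(T_2)=8$ and $\mathrm{diam}(T_3)=\mathrm{diam}(T_4)=6$; for instance a diametral path of $T_1$ runs through the vertices $6,7,5,1,17,25,27,31,32$. This is exactly why the pattern $2n-3,\,2n-5$ only begins at $n=6$.

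For the extension step, suppose $AQ_m$ ($m\geq 5$) carries four CISTs $S_1,\dots,S_4$. For each $i$ choose a center vertex $c_i$ of $S_i$; since $\mathrm{diam}(S_i)\geq 2$, a center is internal in $S_i$, and its eccentricity equals the radius $\lceil \mathrm{diam}(S_i)/2\rceil$. The hypercube edge $e_i=\langle 0c_i,1c_i\rangle$ lies in $AQ_{m+1}$ and joins the copy of $c_i$ in $AQ_m^0$, internal in $S_i^0$, to the copy of $c_i$ in $AQ_m^1$, internal in $S_i^1$; moreover the $e_i$ are pairwise distinct (the $c_i$ are distinct, lying in the pairwise disjoint sets $InV(S_i)$) and each $e_i$, being a hypercube edge, lies in no $E(S_j^0)\cup E(S_j^1)$. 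Hence Corollary \ref{lm3} applies and $\overline S_i:=S_i^0\cup S_i^1\cup\{e_i\}$ are four CISTs of $AQ_{m+1}$. Since $\overline S_i$ consists of two copies of $S_i$ joined by one edge at $c_i$,
\[
\mathrm{diam}(\overline S_i)=\max\{\mathrm{diam}(S_i),\ 2\lceil \mathrm{diam}(S_i)/2\rceil+1\}.
\]
Applying this to $AQ_5$ gives $\mathrm{diam}(\overline T_1)=\mathrm{diam}(\overline T_2)=\max\{8,9\}=9=2\cdot 6-3$ and $\mathrm{diam}(\overline T_3)=\mathrm{diam}(\overline T_4)=\max\{6,7\}=7=2\cdot 6-5$, which establishes the case $n=6$.

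The inductive step for $n\geq 6$ is now just the extension step again: given four CISTs of $AQ_n$ with diameters $2n-3,2n-3,2n-5,2n-5$, all four diameters are odd, and for odd $d$ one has $2\lceil d/2\rceil+1=d+2$, so the displayed formula yields $\mathrm{diam}\mapsto \mathrm{diam}+2$; thus $AQ_{n+1}$ receives four CISTs of diameters $2(n+1)-3,2(n+1)-3,2(n+1)-5,2(n+1)-5$, completing the induction.

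The only genuinely computational part is the base case: checking from the edge lists of $T_1,\dots,T_4$ that the $AQ_5$ diameters are exactly $8,8,6,6$ (and, if one wants them explicitly, exhibiting a center of each tree). Everything else rests on two standard facts used above — a center of a tree of diameter $\geq 2$ is internal, and $\mathrm{diam}(T'\cup T''\cup\{uv\})=\max\{\mathrm{diam}\,T',\,\mathrm{diam}\,T'',\,\mathrm{ecc}_{T'}(u)+1+\mathrm{ecc}_{T''}(v)\}$ with the center achieving eccentricity $\lceil \mathrm{diam}/2\rceil$ — together with Corollary \ref{lm3}, which supplies the CIST property at every stage; the rest is bookkeeping.
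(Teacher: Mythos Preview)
Your proposal is correct and follows essentially the same route as the paper: seed with the explicit four CISTs of $AQ_5$, invoke Corollary~\ref{lm3} to propagate the CIST property, and at each stage join the two copies along a center vertex via the hypercube edge to control the diameter. Your treatment is in fact a bit more careful than the paper's --- you make explicit the formula $\mathrm{diam}(\overline S_i)=\max\{\mathrm{diam}(S_i),\,2\lceil\mathrm{diam}(S_i)/2\rceil+1\}$ and distinguish the even-diameter base step ($8\mapsto 9$, $6\mapsto 7$) from the odd-diameter inductive steps ($d\mapsto d+2$), whereas the paper essentially only writes the upper bound $d_{\overline T_1}(x_i^0,x_j^1)\le (n-1)+1+(n-1)$ and leaves the matching lower bound implicit.
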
 

\begin{proof}  	By using above constructed four CISTs in $AQ_5$ and Corollary \ref{lm3}, we get four CISTs in $AQ_n$ for $n \geq 6$.\\
	As we want vertices to be close together to avoid communication delays. So, we will concentrate on diameters of above trees.\\
		Above constructed trees $T_1, T_2$ have diameters $8$ and $T_3, T_4$ have diameter $6$ in $AQ_5$. We first consider the construction of four CISTs in $AQ_6$. It is sufficient to show the construction of only one tree. Consider tree $T_1$ of $AQ_5$, having longest path of length $8$ and central vertex $10000(17)$. Now, by prefixing $0$ and $1$ to this vertex we get central vertex of $T^0_1$ and $T^1_1$ respectively. Means, we select $u_1 = 010000 \in V(T^0_1)$ and $v_1 = 110000\in V(T^1_1)$. Then, $\overline T_1$ is CIST with diameter $9$ on $AQ_6$. Constructing in the similar manner we get $\overline T_2$, $\overline T_3$, $\overline T_4$ CISTs with diameters $9$, $7$, $7$ respectively on $AQ_6$.    \\
	Let $AQ_{n+1}$ ($n \geq 6)$ be decomposed into two augmented cubes say $AQ^0_n$ and $AQ^1_n$ with
	vertex set say $ \{x^0_i : 1 \leq i \leq 2^{n}\}$ and  $\{x^1_i : 1 \leq i \leq 2^{n}\}$ respectively. 
	Denote by $T^0_1, T^0_2, T^0_3, T^0_4$  the CISTs with diameter $2n-3, 2n-3, 2n-5, 2n-5$ respectively in $AQ^0_n$. Let the identical corresponding CISTs
	in $AQ^1_n$ be denoted by $T^1_1, T^1_2, T^1_3, T^1_4$. \\
		Now, we will prove by induction that the diameters of $\overline T_1, \overline T_2, \overline T_3, \overline T_4)$ CISTs in $AQ_{n+1}$ ($n \geq 6$) are $2n-1, 2n-1, 2n-3, 2n-3$ respectively. \\
	 It is sufficient to prove result for a single tree say $T^0_1$.\\
	Let $P^0_1 = x^0_1-x^0_2-.......x^0_{n-1}-x^0_n-.....x^0_{2n-2} $ be the longest path naturally of length $2n-3$ in tree $T^0_1$. And $P^1_1 = x^1_1-x^1_2-.......x^1_{n-1}-x^1_n-.....x^1_{2n-2} $ be its corresponding path in corresponding tree $T^1_1$. \\
	The vertex $x^0_n$ is in the center of the path $P^0_1$ hence any vertex on tree $T^0_1$ will be within a distance $n-1$ from the vertex $x^0_n$, means any vertex $x^0_i \in V(T^0_1)$, $d_{T^0_1}(x^0_i, x^0_n ) \leq n-1$. Similarly, any vertex $x^1_i \in V(T^1_1)$, $d_{T^1_1}(x^1_i, x^1_n ) \leq n-1$. \\
	Now, consider tree say  $\overline T_1$ be a spanning tree of $AQ_{n+1}$ constructed from $T^0_1$ and $T^1_1$ by adding an edge $\langle x^0_n, x^1_n \rangle \in E(AQ_{n+1})$ to connect two internal vertices $x^0_n \in V(T^0_1)$ and $x^1_n \in V(T^1_1)$. Then, $\overline T_1$ is with diameter $2n-1$. As $V(\overline T_1) = V(T^0_1) \cup V(T^1_1)$, $d_{\overline T_1}(x^0_i, x^1_j )= d_{T^0_1}(x^0_i, x^0_n ) + 1 + d_{T^1_1}(x^1_n, x^1_j ) \leq (n-1) + 1 + (n-1) = 2n-1$.\\
	Constructing in the similar manner we get $\overline T_2$, $\overline T_3$, $\overline T_4$ CISTs with diameters $2n-1$, $2n-3$, $2n-3$ respectively on $AQ_{n+1}$.    \\

\end{proof}

{\bf Concluding remarks.}\\
In this paper, we have proposed a construction of four CISTs in augmented cube $AQ_n$ ($n \geq 6$) of which two trees with diameters $2n-3$ and two trees with diameters $2n-5.$
Our results provide $n-1$ CISTs in augmented cube $AQ_n$ ($n= 3,4,5$) and thus we pointed out that Hasunuma's conjecture does hold for $AQ_n$ when $n= 3,4,5$. As connectivity of augmented cubes is comparatively higher than other variants of hypercubes, an interesting problem is whether Hasunuma's conjecture is true for $AQ_n$ ($n\geq 6$) if so then how to derive an algorithm that construct $n-1$ CISTs in $AQ_n$ ($n\geq 6$)? \\  

\noindent {\bf Acknowledgment:} The first author gratefully
acknowledges the Department of Science and Technology, New Delhi, India
for the award of Women Scientist Scheme for research in Basic/Applied Sciences.

$$\diamondsuit\diamondsuit\diamondsuit$$


\begin{thebibliography}{99}
	

\bibitem{a} {T. Araki}, Dirac’s condition for completely independent spanning trees, J. Graph Theory $77 (2014) 171-179$.

\bibitem{c1} {H.-Y. Chang, H.-L. Wang, J.-S. Yang, J.-M. Chang}, Anote on the degree condition of completely independent spanning trees, IEICE Trans. Fundam. Electron. Commun. Comput. Sci. $98-A (2015) 2191-2193$.

\bibitem{c2} {B. Cheng, J. Fan, D. Wang, J. Yang}, Areliable broadcasting algorithm in locally twisted cubes, in: Proc. 2nd Int. Conf. on Cyber Security and Cloud Computing, $2015$, pp.$323-328$.

\bibitem{cs} {S.A. Choudum, V. Sunitha}, Augmented cubes, Networks $40(2) (2002) 71-84$.

\bibitem{f} {G. Fan, Y. Hong, Q. Liu}, Ore’s condition for completely independent spanning trees, Discrete Appl. Math. $177 (2014) 95-100$.

\bibitem{g} { Z. Ge, S.L. Hakimi}, Disjoint rooted spanning trees with small depths in de Bruijn and Kautz graphs, SIAM J. Comput. $26 (1997) 79-92$.

\bibitem{h1} {T. Hasunuma}, Completely independent spanning trees in the underlying graph of a line digraph, Discrete Math. $234 (2001) 149-157$.

\bibitem{h2} {T. Hasunuma}, Completely independent spanning trees in maximal planar graphs, in: Proc. 28th Int. Workshop on Graph-Theoretic Concepts in Computer Science, WG 2002, in: Lecture Notes in Comput. Sci., vol.$2573, 2002$, pp.$235-245$.

\bibitem{h3} {T. Hasunuma}, Minimum degree conditions and optimal graphs for completely independent spanning trees, in: Proc. 26th Int. Workshop on Combina-torial Algorithms, IWOCA 2016, in: Lecture Notes in Comput. Sci., vol.$9538, 2016$, pp.$260-273$.

\bibitem{h4} {T. Hasunuma, C. Morisaka}, Completely independent spanning trees in torus networks, Networks $60 (2012) 59-69$.

\bibitem{h5} {T. Hasunuma, H. Nagamochi}, Independent spanning trees with small depths in iterated line digraphs, Discrete Appl. Math. $110 (2001) 189-211$.

\bibitem{hl} { X. Hong, Q. Liu}, Degree condition for completely independent spanning trees, Inform. Process. Lett. $(2016) 644 - 648$.
\bibitem{m} { M. Matsushita, Y. Otachi, T. Araki}, Completely independent spanning trees in (partial) k-trees, Discuss. Math. Graph Theory $35 (2015) 427-437$.

\bibitem{p1} {K.-J. Pai, J.-S. Yang, S.-C. Yao, S.-M. Tang, J.-M. Chang}, Completely independent spanning trees on some interconnection networks, IEICE Trans. Inf. Syst. $97-$D $(2014) 2514-2517$.

\bibitem{p2} {K.-J. Pai, J.-M. Chang}, Constructing two completely independent spanning trees in hypercube-variant networks, Theoret. Comput. Sci. $652 (2016) 28-37$.

\bibitem{pt} {F. P$\acute{e}$terfalvi}, Two counterexamples on completely independent spanning trees, Discrete Math. $312 (2012) 808-810$.

\bibitem{w} {Y. Wang, J. Fan, X. Jia, H. Huang}, An algorithm to construct independent spanning trees on parity cubes, Theoret. Comput. Sci. $465 (2012) 61-72$.

	\bibitem{we} { Douglas. B. West},  Introduction to Graph
	theory, Second Edition, Prentice-Hall of India, New Delhi,
	$2002$.

\bibitem{y} {J.-S. Yang, J.-M. Chang, S.-M. Tang, Y.-L. Wang}, Reducing the height of independent spanning trees in chordal rings, IEEE Trans. Parallel Distrib. Syst. $18 (2007), 644 - 657$.
	
	











	
\end{thebibliography}
\end{document}